\documentclass[11pt,reqno,a4paper]{amsart}
\usepackage[T1]{fontenc}
\usepackage{lmodern}
\usepackage{amsmath,amssymb,mathtools,mathrsfs}
\usepackage{microtype}
\usepackage[hidelinks]{hyperref}
\hypersetup{pdftitle={On Mahler U-numbers under rational maps},
  pdfauthor={Diego Marques}}

\newtheorem{theorem}{Theorem}[section]
\newtheorem{proposition}[theorem]{Proposition}
\newtheorem{lemma}[theorem]{Lemma}
\newtheorem{corollary}[theorem]{Corollary}
\theoremstyle{remark}
\newtheorem{remark}[theorem]{Remark}

\newcommand{\Q}{\mathbb Q}
\newcommand{\R}{\mathbb R}
\newcommand{\Z}{\mathbb Z}

\newcommand{\U}{\mathscr U}
\newcommand{\Lio}{\mathscr L}
\DeclareMathOperator{\dist}{dist}
\DeclareMathOperator{\type}{type}
\numberwithin{equation}{section}

\title[Mahler types under rational maps]
{On Mahler $U$-numbers under rational maps}
\author{Diego Marques}
\address{Departamento de Matem\'atica, Universidade de Bras\'ilia,
Bras\'ilia, 70910-900, Brazil}
\email{diego@mat.unb.br}
\subjclass[2020]{Primary 11J81; Secondary 11J82, 12E25}
\keywords{Mahler's classification, $U$-numbers, Liouville numbers,
rational functions, algebraic approximation, Hilbert irreducibility}
\date{}

\begin{document}

\begin{abstract}
In 1906, Maillet proved that every nonconstant rational function over
$\Q$ maps Liouville numbers to Liouville numbers. Let $R$ be a
nonconstant rational function with real algebraic coefficients, of degree
$d$, and let $e$ be the degree of its minimal field of definition over
$\Q$. We prove that, for every $m\geq 1$, the possible types of $R(\xi)$, as
$\xi$ ranges over $U$-numbers of type $m$, are exactly the integers $r$
with $\lceil m/(ed)\rceil\leq r\leq em$. Moreover, every such type
occurs for some $\xi$ of type $m$ in every nonempty open interval.
Consequently, for $m\geq2$, the only rational functions with real
algebraic coefficients preserving the class of $U$-numbers of type $m$
are the M\"obius transformations over $\Q$.
\end{abstract}

\maketitle

\section{Introduction}

A real number $\xi$ is a \emph{Liouville number} if, for every integer
$N\geq 1$, there are infinitely many coprime integers $p,q$, with $q\geq 2$,
such that
\[
  0<\left|\xi-\frac pq\right|<q^{-N}.
\]
We denote the set of Liouville numbers by $\Lio$.

In 1906, Maillet~\cite[Chap.~III, pp.~32--33]{Maillet} proved that every
nonconstant rational function over $\Q$ maps $\Lio$ into itself. A natural
extension of this preservation problem arises from Mahler's
classification~\cite{Mahler1932}. Introduced in 1932, this classification
partitions the transcendental numbers into three disjoint classes, called
$S$-, $T$-, and $U$-numbers, according to their approximation by integer
polynomials. The $U$-numbers generalize Liouville numbers by replacing
rational approximation with approximation by algebraic numbers of bounded
degree.

We use Koksma's formulation in terms of approximation by algebraic
numbers~\cite{Koksma}. The classifications of Mahler and Koksma are
equivalent and assign the same type to each $U$-number; see
also~\cite[Chap.~3]{Bugeaud}. For $\xi\in\R$ and an integer $n\geq 1$,
let $w_n^*(\xi)$ denote the supremum of the real numbers $w$ for which
\[
  0<|\xi-\alpha|\leq H(\alpha)^{-w-1}
\]
holds for infinitely many real algebraic numbers $\alpha$ of degree exactly $n$. Here $H(\alpha)$ is the maximum of the absolute values of the
coefficients of its primitive minimal polynomial in $\Z[X]$. We use the exact-degree convention; this gives the usual $U$-type.

A transcendental real number $\xi$ is called a \emph{$U_m$-number} if $m$
is the least positive integer such that $w_m^*(\xi)=\infty$. We denote the
class of real $U_m$-numbers by $\U_m$. Thus $\U_1=\Lio$, and, for
$m\geq 2$,
\[
  \xi\in\U_m
  \quad\Longleftrightarrow\quad
  w_m^*(\xi)=\infty
  \quad\text{and}\quad
  w_j^*(\xi)<\infty\quad(1\leq j<m).
\]
The integer $m$ is called the \emph{type} of $\xi$ and is denoted by
$\type(\xi)$. For a rational function $R=P/Q$ in reduced form, we write
$\deg R=\max\{\deg P,\deg Q\}$.

In this notation, Maillet's theorem states that
\[
  R(\U_1)\subseteq\U_1
  \qquad\text{for every nonconstant }R\in\Q(x).
\]

M\"obius transformations over $\Q$ preserve every $U$-class: in fact, they
preserve the Koksma exponents, and hence $M(\U_m)=\U_m$ for every
$m\geq 1$; see~\cite[Exercise~3.1]{Bugeaud}. For a rational function over
$\Q$ of degree $d$, a classical restriction confines the type of the image
of a $U_m$-number to $\{\lceil m/d\rceil,\ldots,m\}$; see
\cite[Exercise~3.6]{Bugeaud}.

We ask which types actually occur, allowing the coefficients of $R$ to be
real algebraic numbers. Write $R=P/Q$ with $P,Q$ coprime and $Q$ monic,
and let $K_R$ be the field generated over $\Q$ by their coefficients.
This is the \emph{minimal field of definition} of $R$. Put
$d=\deg R$ and $e=[K_R:\Q]$. The classical degree argument now gives
$\lceil m/(ed)\rceil\leq\type(R(\xi))\leq em$ for $\xi\in\U_m$;
we record it as Proposition~\ref{prop:type-bounds}. 

Our main theorem
shows that these bounds are sharp locally: every permitted integer
occurs in every nonempty open interval.

\begin{theorem}\label{thm:local-spectrum}
Let $R\in\overline{\Q}(x)\cap\R(x)$ be nonconstant, put $d=\deg R$ and
$e=[K_R:\Q]$, and let $m\geq 1$ be an integer. For every nonempty open interval
$J\subseteq\R$, we have
\begin{equation}\label{eq:local-spectrum}
  \bigl\{\type(R(\xi)):\xi\in J\cap\U_m\bigr\}
  =\bigl\{r\in\Z:\lceil m/(ed)\rceil\leq r\leq em\bigr\}.
\end{equation}
\end{theorem}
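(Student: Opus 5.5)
The inclusion ``$\subseteq$'' is Proposition~\ref{prop:type-bounds}, so the work is to realize every $r$ with $\lceil m/(ed)\rceil\le r\le em$ by some $\xi\in J\cap\U_m$. The plan is a Liouville-type construction. We build $\xi$ as the limit of a rapidly converging sequence of real algebraic numbers $\alpha_k\in J$. Each $\alpha_k$ has degree exactly $m$, and the images $\beta_k=R(\alpha_k)$ have degree exactly $r$. The separations are chosen so that
\[
|\xi-\alpha_k|\le H(\alpha_{k})^{-k}, \qquad |R(\xi)-\beta_k|\le H(\beta_k)^{-k}.
\]
Then $w_m^*(\xi)=\infty$ and $w_r^*(R(\xi))=\infty$. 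Since $R$ is locally bi-Lipschitz away from critical points and poles, we shrink $J$ to avoid these, so the second estimate follows from the first up to constants. Heights of $\beta_k$ are polynomially bounded in $H(\alpha_k)$, because the height of $R(\alpha)$ is at most $C\,H(\alpha)^{c}$ with $c$ depending only on $R$.

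The main algebraic step is to produce, densely in $J$, numbers $\alpha$ of degree exactly $m$ with $\deg R(\alpha)=r$. Put $K=K_R$ and $\beta=R(\alpha)$. Then $\beta\in K(\alpha)$ and $\alpha$ is a root of $P(x)-\beta Q(x)$ over $K(\beta)$, so
\[
[\Q(\alpha,\beta):\Q]=m\cdot[K(\alpha):\Q(\alpha)]/\ldots
\]
and the relevant tower degrees must be arranged. For this I would reverse the construction. First choose $\beta$ of degree $r$ over $\Q$ such that $K(\beta)$ has a prescribed degree $s$ over $K$; here $r/e\le s\le r$ with a suitable compatibility. Then take $\alpha$ to be a root of $P-\beta Q$ of degree $t\le d$ over $K(\beta)$, chosen so that $\Q(\alpha)$ has degree exactly $m$. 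Hilbert irreducibility (hence the keyword) over $K(\beta)$ and over $\Q$ supplies $\beta$ in any open set for which $P(x)-\beta Q(x)$ factors in a controlled way. In the extreme cases, generic $\beta$ makes it irreducible of degree $d$. Intermediate $t$ would come from composing with auxiliary factors, for instance taking $\beta=R(\gamma)$ with $\gamma$ generated by a lower-degree choice. The cases realizing $\type = em$ use $\alpha$ of degree $m$ with $K(\alpha)$ of degree $em$ and $\beta$ generating $K(\alpha)$. The cases realizing the bottom value $\lceil m/(ed)\rceil$ instead need $\alpha$ to be of degree $\le d$ over $\Q(\beta)K$. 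I expect this combinatorial realization of \emph{every} intermediate $r$, with exact degrees, to be the main obstacle. I would first treat $R\in\Q(x)$, where $e=1$, interpolating via $\alpha$ of degree $m$ lying in a field $F$ such that $\Q(R(\alpha))$ has a prescribed index in $F$. I would then handle the factor $e$ by choosing $\alpha$ so that $\Q(\alpha)$ and $K$ are linearly disjoint, or not, as needed.

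The analytic step is to ensure $\xi$ is not of smaller type and that $R(\xi)$ has type exactly $r$, not less. For this I would use a standard lemma. If the $\alpha_k$ converge fast enough, with $H(\alpha_{k+1})$ enormous relative to $H(\alpha_k)$, and are chosen generically, then $w_j^*(\xi)<\infty$ for $j<m$. The reason is that any algebraic $\gamma$ of degree $j<m$ close to $\xi$ is close to some $\alpha_k$. Liouville's inequality $|\alpha_k-\gamma|\ge c\,H(\alpha_k)^{-j}H(\gamma)^{-m}$ then gives a uniform finite exponent, provided the gaps are tuned, as in the known constructions of $U_m$-numbers. The same argument applies to $R(\xi)$ with the $\beta_k$ of degree $r$. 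Alternatively, once $\type(R(\xi))\le r$ is known, the lower bound can be obtained by applying Proposition~\ref{prop:type-bounds}-type degree bookkeeping to the branches of $R^{-1}$. This yields $\type(R(\xi))=r$ for $\xi\in J\cap\U_m$.
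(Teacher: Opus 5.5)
\paragraph{Main gap: the single sequence cannot exist.} Your proposal breaks at its central algebraic step. In general there is no real algebraic $\alpha$ with $\deg\alpha=m$ and $\deg R(\alpha)=r$, so no single sequence $\alpha_k$ with those exact degrees exists. Already for $R\in\Q(x)$ we have $\Q(R(\alpha))\subseteq\Q(\alpha)$, so $\deg R(\alpha)$ must divide $\deg\alpha$. Take $R(x)=x^2$ and $m=3$. The theorem asserts that $r=2$ occurs, since $\lceil 3/2\rceil=2\le 2\le 3$, yet no cubic $\alpha$ has $\alpha^2$ quadratic. No Hilbert irreducibility or tower bookkeeping can get around this; the obstacle you flagged is an impossibility. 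Your reversed construction points the same way. If $\beta$ has degree $r$ and is generic, with $P-\beta Q$ irreducible and $K(\beta)$ of degree $er$, then $\alpha$ has degree $edr$, not $m$.

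\paragraph{The missing idea: decouple the two approximation properties.} The paper builds one rapidly convergent sequence that alternates between two families:
\begin{itemize}
\item points of degree $m$ whose images have the full degree $em$, namely rational translates $\theta+t$ of a $\theta$ whose minimal polynomial stays irreducible over the normal closure of $K_R$;
\item points of degree $D=edr$ whose images have degree $r$, namely preimages under a local inverse of $R$ of points $\vartheta+t$ of degree $r$ close to $R(x_0)$. Their degree comes from the norm polynomial $\operatorname{N}_{E/\Q}\bigl(P(X)-(T+\vartheta)Q(X)\bigr)$, which is irreducible of $X$-degree $edr$, combined with local Hilbert specialization.
\end{itemize}
The odd terms give $w_m^*(\xi)=\infty$, and the images of the even terms give $w_r^*(R(\xi))=\infty$. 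The admissibility conditions $edr\ge m$ and $em\ge r$ say precisely that no term has degree below $m$ and no image has degree below $r$. Hence the extra terms never create low-degree approximations that would lower either type.

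\paragraph{Secondary problem: the lower bound on $\type(R(\xi))$.} Your fallback via the degree bookkeeping of Proposition~\ref{prop:type-bounds} only gives $\type(R(\xi))\ge\lceil m/(ed)\rceil$, not $\ge r$. What you need is a uniform measure $|\xi-\alpha|\ge c\exp(-E_0\mathsf h(\alpha))$ for every real algebraic $\alpha$ of degree at most $D$ that is not a term of the sequence; this is the Bilu--Marques--Moreira gap principle. The paper then transfers it through the local inverse $S$. If $\beta$ has degree at most $r-1$, then $S(\beta)$ has degree at most $ed(r-1)<D$. It is also not a term of the sequence, because every image of a term has degree $em$ or $r$, both at least $r$. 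Alternatively, the gap principle can be applied directly to $R(\xi)$ with the images $R(\gamma_k)$, again because their degrees are all at least $r$.
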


For $R\in\Q(x)$, the spectrum is $\{\lceil m/d\rceil,\ldots,m\}$.
For $m=1$, it is $\{1,\ldots,e\}$, independently of $d$; in particular,
the largest image type determines the degree of the minimal field of
definition. The theorem also settles the preservation problem.

\begin{corollary}\label{cor:classification}
Let $m\geq 1$ be an integer and let
$R\in\overline{\Q}(x)\cap\R(x)$ be nonconstant.
Then $R(\U_m)\subseteq\U_m$ if and only if $R\in\Q(x)$ and either
$m=1$ or $\deg R=1$. In particular, for $m\geq 2$, the preservers are
precisely the M\"obius transformations over $\Q$, and then
$R(\U_m)=\U_m$.
\end{corollary}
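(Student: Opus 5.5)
The plan is to read off both directions from Theorem~\ref{thm:local-spectrum}, applied with $J=\R$. Note first that $R(\xi)$ is defined for every $\xi\in\U_m$, since the poles of $R$ are algebraic and $U$-numbers are transcendental. By Theorem~\ref{thm:local-spectrum} (together with Proposition~\ref{prop:type-bounds}), $R(\xi)$ is a $U$-number for each $\xi\in\U_m$, and the set of its possible types is exactly $\{r\in\Z:\lceil m/(ed)\rceil\le r\le em\}$. Consequently $R(\U_m)\subseteq\U_m$ holds if and only if this integer interval equals $\{m\}$.

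For the forward direction, assume $R(\U_m)\subseteq\U_m$. Then the interval is $\{m\}$, so its upper endpoint $em$ equals $m$. This forces $e=1$, meaning $K_R=\Q$ and hence $R\in\Q(x)$. Its lower endpoint $\lceil m/d\rceil$ must also equal $m$. If $m\ge 2$ and $d\ge 2$, then $\lceil m/d\rceil\le\lceil m/2\rceil<m$, which is a contradiction. Therefore either $m=1$ or $d=1$.

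For the converse, suppose $R\in\Q(x)$, so $e=1$.
\begin{itemize}
\item If $m=1$, the interval is $\{\lceil 1/d\rceil,\dots,1\}=\{1\}$. This recovers Maillet's theorem.
\item If $d=1$, the interval is $\{m\}$.
\end{itemize}
In either case every $\xi\in\U_m$ satisfies $\type(R(\xi))=m$, so $R(\U_m)\subseteq\U_m$. Alternatively, for $d=1$ one may simply cite the preservation of Koksma exponents by rational M\"obius maps.

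For the final assertion, take $m\ge2$. The preservers are then exactly the $R\in\Q(x)$ with $\deg R=1$, which are the M\"obius transformations over $\Q$. To get equality, observe that the inverse $R^{-1}$ is again a M\"obius transformation over $\Q$, so $R^{-1}(\U_m)\subseteq\U_m$ by what was just shown. Applying $R$ gives
\[
  \U_m=R\bigl(R^{-1}(\U_m)\bigr)\subseteq R(\U_m),
\]
and combined with $R(\U_m)\subseteq\U_m$ this yields $R(\U_m)=\U_m$.

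I do not expect any step here to be a real obstacle. The only point needing care is that ``$R(\U_m)\subseteq\U_m$'' really is equivalent to the spectrum being $\{m\}$. This uses the fact that Theorem~\ref{thm:local-spectrum} describes the full set of types that occur, with $J=\R$, rather than just bounds on them.
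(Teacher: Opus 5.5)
Your proposal is correct and follows the paper's proof. Theorem~\ref{thm:local-spectrum} gives the forward direction: it realizes the type $em>m$ when $e>1$, and a type below $m$ (the paper names $m-1$) when $m,d\ge 2$. The converse is the bound of Proposition~\ref{prop:type-bounds} with $e=1$, and equality in the M\"obius case follows by applying the inclusion to $R^{-1}$. Rephrasing the argument as ``the spectrum over $J=\R$ equals $\{m\}$'' changes nothing of substance.
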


\begin{remark}
The field of definition matters even in degree one. For example,
$R(x)=x+\sqrt2$ attains each of the types $1,2,3,4$ on $J\cap\U_2$
for every nonempty open interval $J$. More generally, for every pair of
types permitted by Theorem~\ref{thm:local-spectrum}, the corresponding
points $(\xi,R(\xi))$ are dense on the real graph of $R$ away from its
poles.
\end{remark}

Earlier, Alnia{\c c}\i k \cite{Alniacik} proved that, for every $m\geq 1$,
rationally parametrized curves over $\mathbb Q$ contain infinitely many
points whose coordinates are all $U_m$-numbers. Related work has considered rational or
algebraic functions evaluated at special subclasses of Liouville numbers
defined by stronger approximation properties; see \cite{BM,CMT}.
Here, rather than seeking points at which the $U$-type is preserved, we fix
the rational map and determine locally the complete spectrum of image types
on an arbitrary $U$-class.

For the proof, fix an admissible type $r$. We construct a rapidly
convergent sequence of algebraic numbers for which the pairs of degrees
$(\deg\alpha,\deg R(\alpha))$ alternate between $(m,em)$ and $(edr,r)$.
Rational translates provide the first pair; a norm polynomial and local
Hilbert specialization provide the second. An auxiliary choice of number
field separates the conjugates and forces the full degrees. The gap
principle of Bilu--Marques--Moreira~\cite[Theorem~3.1]{BMM} excludes
lower-degree approximants to the limit. Its conclusion, transferred
through a local inverse of $R$, gives the exact type of the image.

\section{Preliminaries}

\emph{Notation.} We write $A\ll_{\mathcal P}B$ or
$A=O_{\mathcal P}(B)$ if $|A|\leq CB$, where $B\geq 0$ and $C>0$
depends only on the indicated parameters $\mathcal P$. For nonnegative
$A,B$, the notation $A\asymp_{\mathcal P}B$ means both
$A\ll_{\mathcal P}B$ and $B\ll_{\mathcal P}A$. Dependence on fixed data
may be omitted. All algebraic degrees are taken over $\Q$.

\subsection{Heights and rational functions}

We use the following form of Liouville's inequality; see, for instance,
\cite[Appendix~A]{Bugeaud}. If $\alpha\ne\beta$ are algebraic numbers of
degrees $a,b\leq D$, then
\begin{equation}\label{eq:separation}
  |\alpha-\beta|\gg_D H(\alpha)^{-b}H(\beta)^{-a}.
\end{equation}
We also use Northcott's theorem: algebraic numbers of bounded degree and
bounded height form a finite set; see~\cite[Chap.~1]{BombieriGubler}.

Let $\alpha$ be an algebraic number of degree $a$, with primitive minimal
polynomial $a_0\prod_{i=1}^a(X-\alpha_i)\in\Z[X]$. Its \emph{absolute
logarithmic Weil height} is
\[
  \mathsf h(\alpha)=\frac1a\left(
  \log|a_0|+\sum_{i=1}^a\log\max\{1,|\alpha_i|\}\right).
\]
The standard height estimates~\cite[Chap.~1]{BombieriGubler} give
\begin{equation}\label{eq:weil-heights}
  \log H(\alpha)=a\,\mathsf h(\alpha)+O_a(1),
  \qquad
  \mathsf h(R(\alpha))=d\,\mathsf h(\alpha)+O_R(1),
\end{equation}
where $R\in\overline{\Q}(x)$ is nonconstant of degree $d$ and
$R(\alpha)$ is finite. We will also use
\begin{equation}\label{eq:sum-height}
  \mathsf h(\alpha+\beta)
  \leq\mathsf h(\alpha)+\mathsf h(\beta)+\log2.
\end{equation}
For $t=a/q\in\Q$ in lowest terms, with $q>0$, we have
$\mathsf h(t)=\log H(t)$, where $H(t)=\max\{|a|,q\}$.

By~\eqref{eq:weil-heights}, $w_n^*(\xi)=\infty$ if and only if, for
every $A>0$, the inequality
\[
  0<|\xi-\alpha|\leq\exp\bigl(-A\mathsf h(\alpha)\bigr)
\]
holds for infinitely many real algebraic numbers $\alpha$ of degree $n$.
We will use this formulation throughout the proof. Also, if
$a=\deg\alpha\leq D$ and $b=\deg R(\alpha)$, then
\begin{equation}\label{eq:height-map}
  H(R(\alpha))\asymp_{R,D}H(\alpha)^{db/a}.
\end{equation}

Write $R=P/Q$ in reduced form, and put $K=K_R$ and $e=[K:\Q]$.
Since $\alpha$ is a root of $P(X)-R(\alpha)Q(X)$, we have
$[K(\alpha):K(R(\alpha))]\leq d$. Consequently,
\begin{equation}\label{eq:extension-degree}
  \deg R(\alpha)\leq e\deg\alpha,
  \qquad
  \deg\alpha\leq ed\deg R(\alpha).
\end{equation}

For later use, we record the resulting restriction on types. For rational
coefficients, it is the classical consequence of algebraic dependence
recalled in the introduction; see~\cite[Exercise~3.6]{Bugeaud}. We include
the short argument for real algebraic coefficients.

\begin{proposition}\label{prop:type-bounds}
Let $R\in\overline{\Q}(x)\cap\R(x)$ be nonconstant, with $d=\deg R$ and
$e=[K_R:\Q]$. If $\xi\in\U_m$, then $R(\xi)\in\U_j$ for some integer
$j$ satisfying
\[
  \left\lceil\frac{m}{ed}\right\rceil\leq j\leq em.
\]
\end{proposition}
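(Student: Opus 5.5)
We prove two facts: $\eta:=R(\xi)$ is a $U$-number with $w_n^*(\eta)=\infty$ for some $n\le em$, and $w_k^*(\eta)<\infty$ for every $k<\lceil m/(ed)\rceil$. Together these give $j\le em$ and $j\ge\lceil m/(ed)\rceil$. First, $\eta$ is transcendental. If $\eta$ were algebraic, then $\xi$ would be a root of $P(X)-\eta Q(X)$, a nonzero polynomial with algebraic coefficients (nonzero because $R$ is nonconstant), so $\xi$ would be algebraic. Next, fix $\xi_0=\xi$. We may assume $\xi$ is not a pole and that $R'(\xi)\neq0$, since otherwise $\xi$ would be a root of an algebraic polynomial. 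Then $R$ is bi-Lipschitz on a small neighbourhood $I$ of $\xi$, with a local inverse $S$ defined on $R(I)$.

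\emph{Upper bound.} Take infinitely many real algebraic $\alpha$ of degree $m$ with $0<|\xi-\alpha|\le\exp(-A\mathsf h(\alpha))$ for arbitrary $A$. All but finitely many lie in $I$. Then $\beta=R(\alpha)$ is real, and by \eqref{eq:extension-degree} it has degree $b\le em$. The map $R$ is injective on $I$, so $0<|\eta-\beta|\ll|\xi-\alpha|$. By \eqref{eq:weil-heights}, $\mathsf h(\beta)\le d\,\mathsf h(\alpha)+O(1)$, and $\mathsf h(\alpha)\to\infty$ by Northcott. Hence $|\eta-\beta|\le\exp(-(A/d-o(1))\mathsf h(\beta))$. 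The degrees $b$ take finitely many values in $\{1,\dots,em\}$, so by pigeonhole one value $n$ occurs infinitely often for each $A$. Taking $A$ along a sequence tending to infinity and pigeonholing again on $n$, we get a fixed $n\le em$ with $w_n^*(\eta)=\infty$. The heights of the $\beta$ tend to infinity because the $\beta$ are distinct, being images of distinct $\alpha$ under an injective map, and only finitely many of them have bounded degree and bounded height. So $\eta$ is a $U$-number of type $j\le n\le em$.

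\emph{Lower bound.} Suppose $w_k^*(\eta)=\infty$ with $k<\lceil m/(ed)\rceil$, i.e.\ $edk<m$. Take real algebraic $\beta$ of degree $k$ close to $\eta$ with $|\eta-\beta|\le\exp(-A\mathsf h(\beta))$. Let $\alpha=S(\beta)\in I$. Then $\alpha$ is a root of $P(X)-\beta Q(X)$ and is real, with $\deg\alpha\le ed\,k<m$ by the same argument as \eqref{eq:extension-degree}. Also $|\xi-\alpha|\ll|\eta-\beta|$, and $\mathsf h(\alpha)=(\mathsf h(\beta)+O(1))/d$ by \eqref{eq:weil-heights}. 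Hence $|\xi-\alpha|\le\exp(-(dA-O(1))\mathsf h(\alpha))$, and $\alpha\ne\xi$ because $\xi$ is transcendental. Pigeonholing on $\deg\alpha\in\{1,\dots,m-1\}$ as before gives $w_{j'}^*(\xi)=\infty$ for some $j'<m$, which contradicts $\xi\in\U_m$.

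The main obstacle is bookkeeping rather than ideas. We have to choose the neighbourhood on which $R$ is bi-Lipschitz, and we need $\mathsf h(\alpha)\to\infty$ so that the $O(1)$ terms in the height comparisons are absorbed. The second point again follows from Northcott together with the injectivity of $R$ on $I$.
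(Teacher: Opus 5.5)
Your proof is correct and follows essentially the same route as the paper. You push degree-$m$ approximants of $\xi$ forward by $R$, using \eqref{eq:extension-degree}, \eqref{eq:weil-heights} and a pigeonhole on degrees, to get $j\le em$; you pull approximants of $\eta$ back through the local inverse $S$ to get $m\le edj$. The only differences are cosmetic: you argue the lower bound by contradiction for any $k$ with $edk<m$, where the paper pulls back approximants of the exact type $j$, and your ``$dA-O(1)$'' should read $dA-o(1)$, since the additive error is $O(A)$ and is absorbed using $\mathsf h(\alpha)\to\infty$.
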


\begin{proof}
Put $\eta=R(\xi)$. Since the poles and critical points of $R$ are algebraic and $\xi$ is
transcendental, $\xi$ is neither a pole nor a critical point of $R$.
Hence $R$ has a Lipschitz local inverse $S$ near $\eta$. Moreover,
$\eta$ is transcendental, for otherwise $P(X)-\eta Q(X)$ would be a
nonzero polynomial with algebraic coefficients vanishing at $\xi$.

Choose real algebraic numbers $\alpha_k$ of degree $m$ such that
$\mathsf h(\alpha_k)\to\infty$ and
\[
0<|\xi-\alpha_k|\leq \exp\bigl(-k\mathsf h(\alpha_k)\bigr).
\]
For large $k$, the points $\alpha_k$ lie in a fixed neighborhood on which
$R$ is Lipschitz. By \eqref{eq:extension-degree} and
\eqref{eq:weil-heights},
\[
\deg R(\alpha_k)\leq em,
\qquad
\mathsf h(R(\alpha_k))
=d\mathsf h(\alpha_k)+O_R(1)\to\infty.
\]
Hence $R(\alpha_k)$ approximates $\eta$ with exponents tending to
infinity. Passing to a subsequence of constant degree $b\leq em$, we
obtain $w_b^*(\eta)=\infty$. Let
\[
j=\min\{\ell\geq1:w_\ell^*(\eta)=\infty\}.
\]
Then $\eta\in\U_j$ and $j\leq b\leq em$.

Conversely, take arbitrarily good algebraic approximants $\beta_k$ of
degree exactly $j$ to $\eta$ and put
$\widetilde{\alpha}_k=S(\beta_k)$ for all sufficiently large $k$. By
\eqref{eq:extension-degree},
\[
\deg\widetilde{\alpha}_k\leq edj,
\]
while the Lipschitz property of $S$ and
\[
\mathsf h(\beta_k)
=d\mathsf h(\widetilde{\alpha}_k)+O_R(1)
\]
show that $\widetilde{\alpha}_k$ approximates $\xi$ with exponents tending
to infinity. Passing again to a subsequence of constant degree, we obtain
$w_a^*(\xi)=\infty$ for some $a\leq edj$. Since $\xi\in\U_m$, the
minimality of $m$ gives $m\leq a$, and therefore
\[
\left\lceil\frac{m}{ed}\right\rceil\leq j\leq em.
\]
\end{proof}

\subsection{Local Hilbert specialization}

We use the following local form of Hilbert's irreducibility theorem.
It allows an irreducible specialization to be chosen arbitrarily close
to a prescribed real number, with quantitative control in terms of height.

\begin{lemma}[Local Hilbert specialization]\label{lem:local-hilbert}
Let $F(T,X)\in\Q[T,X]$ be irreducible in $\Q(T)[X]$, with
$n=\deg_X F\geq 1$. Let $c\in\R$ and $0<\lambda<1$. Then there exist
rational numbers $t$ of arbitrarily large height such that
\[
  0<|t-c|\leq H(t)^{-\lambda}
\]
and $F(t,X)$ is irreducible over $\Q$ of degree $n$.
\end{lemma}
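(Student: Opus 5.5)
The plan is to count. Rationals $t$ in a suitable shrinking neighbourhood of $c$ are much more numerous than the rationals at which $F(t,X)$ can fail to be irreducible of degree $n$. For the exceptional set we invoke the quantitative form of Hilbert's irreducibility theorem, due to S.~D.~Cohen and in Serre's formulation via thin sets (Serre, \emph{Topics in Galois Theory}, \S3.4; \emph{Lectures on the Mordell--Weil Theorem}, \S13).

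Let $\Omega\subseteq\Q$ be the set of $t$ for which either the leading coefficient of $F$ in $X$ vanishes at $t$, or $F(t,X)$ is reducible over $\Q$.

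\emph{Step 1: the exceptional set is small.} Since $F$ is irreducible in $\Q(T)[X]$, the set $\Omega$ is thin in $\mathbb P^1(\Q)$. Indeed, the leading coefficient contributes only finitely many points. Reducibility of $F(t,X)$ forces $t$ to lie, up to finitely many points, in the image of $\mathbb P^1$ under some morphism of degree $\geq 2$ from a curve arising from a proper intermediate field. By Cohen/Serre,
\[
\#\{t\in\Omega: H(t)\leq B\}\ll_F B\log B .
\]
In fact $O(B)$ holds, but $B\log B$ suffices.

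\emph{Step 2: the target set is large.} Fix $C=2\max\{1,|c|\}+2$. For a large parameter $B$, consider fractions $t=a/q$ with $q$ prime, $B/2<q\leq B$, and
\[
0<|a/q-c|\leq (CB)^{-\lambda}.
\]
For large $B$ the admissible numerators $a$ lie in an interval of length $2q(CB)^{-\lambda}<q$. Hence $q\nmid a$, so $a/q$ is in lowest terms, and distinct pairs $(a,q)$ give distinct rationals. Moreover $|a|\leq q(|c|+1)$, so
\[
H(t)=\max\{|a|,q\}\leq CB, \qquad\text{whence } |t-c|\leq H(t)^{-\lambda}.
\]
Also $H(t)\geq q>B/2$, so these $t$ have large height. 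Since $\lambda<1$, the interval length $2q(CB)^{-\lambda}\gg B^{1-\lambda}$ tends to infinity. Each prime $q$ therefore contributes $\gg B^{1-\lambda}$ values of $a$, after discarding the single possible value with $a/q=c$. Summing over the $\gg B/\log B$ primes in $(B/2,B]$ gives $\gg B^{2-\lambda}/\log B$ such $t$, all with $H(t)\leq CB$.

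\emph{Step 3: conclusion.} Since $\lambda<1$,
\[
B^{2-\lambda}/\log B \gg (CB)\log(CB)
\]
for large $B$. So for every large $B$ there is some $t$ from Step 2 with $t\notin\Omega$. For such $t$, the leading coefficient of $F$ does not vanish at $t$, so $\deg_X F(t,X)=n$, and $F(t,X)$ is irreducible over $\Q$. Since $H(t)>B/2$ and $B$ is arbitrary, these $t$ have arbitrarily large height.

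The main obstacle is Step 1, namely obtaining a bound for the exceptional set that is $o(B^{2-\lambda})$ uniformly in the window. The bound must be a global count over all $t$ of height $\leq B$, not merely a statement that the Hilbert set is infinite or dense. I would cite Cohen's theorem directly. If a self-contained route were needed, the fallback is to reduce to finitely many covers $Y_i\to\mathbb P^1$ of degree $\geq 2$, and to bound rational points of height $\leq B$ in their images by $O(B)$. This last bound comes from Bombieri--Pila type counting for genus $0$ covers, and from finiteness or Faltings for higher genus.
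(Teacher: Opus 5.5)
Your argument is correct and is essentially the paper's proof: an $O(B)$ bound for the thin exceptional set (Serre, \emph{Topics in Galois Theory}, Prop.~3.4.2, which the paper cites) is outnumbered by $\gg B^{2-\lambda}$ rationals in a shrinking window around $c$. The only difference is that your prime denominators make reducedness automatic at the cost of a harmless $\log B$, whereas the paper sums $\varphi(q)$ over $Q\le q<2Q$ with an auxiliary exponent $\kappa\in(\lambda,1)$.

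Two small corrections. First, the $O(B)$ (or $O(B\log B)$) input must be Serre's bound for thin subsets of $\mathbf P^1(\Q)$, not Cohen's large-sieve theorem. For $\mathbf P^1(\Q)$, Cohen's theorem gives only $O(B^{3/2}\log B)$, which is insufficient once $\lambda\ge 1/2$. Also, in your fallback, genus-one covers need Mordell--Weil plus N\'eron's estimate rather than finiteness. Second, an interval of length $<q$ can still contain a multiple of $q$. Deduce $q\nmid a$ instead from the fact that $a/q\in\Z$ would be an integer different from $c$ lying within $(CB)^{-\lambda}$ of $c$, which is impossible for large $B$.
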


\begin{proof}
By~\cite[Proposition~3.3.5]{Serre}, the exceptional specializations are
contained in a thin set $\Omega\subset\mathbf P^1(\Q)$, including the
zeros of the leading coefficient of $F$ in $X$. Serre's estimate
\cite[Proposition~3.4.2]{Serre} gives
\begin{equation}\label{eq:thin-count}
  \#\{t\in\Omega\cap\Q:H(t)\leq B\}\ll_\Omega B.
\end{equation}

Choose $\lambda<\kappa<1$. For an integer $Q\geq 2$, let
$\mathcal T_Q$ consist of the reduced fractions $a/q$ such that
\[
  Q\leq q<2Q,
  \qquad c<\frac aq<c+Q^{-\kappa}.
\]
For each $q$, the elementary coprimality count gives
\[
  \#\{a\in\Z:cq<a<(c+Q^{-\kappa})q,\ \gcd(a,q)=1\}
  =Q^{-\kappa}\varphi(q)+O(\tau(q)),
\]
where $\varphi$ is Euler's totient function and $\tau$ counts the positive
divisors. We use the classical estimates
\[
  \sum_{q\leq x}\varphi(q)
  =\frac{3}{\pi^2}x^2+O(x\log x),
  \qquad
  \sum_{q\leq x}\tau(q)\ll x\log x;
\]
see~\cite[Chap.~3]{Apostol}. Summing the coprimality count over
$Q\leq q<2Q$, we obtain
\begin{equation}\label{eq:many-rationals}
  \#\mathcal T_Q
  =\frac9{\pi^2}Q^{2-\kappa}+O(Q\log Q)
  \gg Q^{2-\kappa}.
\end{equation}

Every $t\in\mathcal T_Q$ satisfies
\[
  Q\leq H(t)\leq CQ,
  \qquad C=2(1+|c|).
\]
Thus~\eqref{eq:thin-count} excludes at most $O_{\Omega,c}(Q)$ elements
of $\mathcal T_Q$. Since $2-\kappa>1$, the count in
\eqref{eq:many-rationals} is larger for all sufficiently large $Q$.
We may therefore choose $t_Q\in\mathcal T_Q\setminus\Omega$.

By construction, $F(t_Q,X)$ is irreducible over $\Q$ of degree $n$.
Moreover,
\[
  0<(t_Q-c)H(t_Q)^\lambda
  \leq C^\lambda Q^{\lambda-\kappa}<1
\]
for all sufficiently large $Q$, since $\lambda<\kappa$. This gives the
required approximation, while $H(t_Q)\geq Q\to\infty$.
\end{proof}

\subsection{A gap principle}

The exclusion of approximation in smaller degrees will follow from the
gap principle of Bilu--Marques--Moreira~\cite[Theorem~3.1]{BMM}. Related
arguments go back to LeVeque~\cite{LeVeque}; see also
\cite[Theorem~7.4]{Bugeaud}. We state its real case below.

\begin{theorem}[Bilu--Marques--Moreira]\label{thm:gap}
Let $N\geq 1$, and let $z$ be a transcendental real number. Let
$(a_k)_{k\geq 1}$ be a sequence of real algebraic numbers of degree at
most $N$ converging to $z$, with $\mathsf h(a_k)\to\infty$. Suppose that
there are positive real numbers $u_k\to\infty$ and constants $C,A>0$
such that
\begin{equation}\label{eq:gap-hypotheses}
  |z-a_k|\leq C\exp\bigl(-u_k\mathsf h(a_k)\bigr),
  \qquad
  \mathsf h(a_{k+1})\leq A u_k\mathsf h(a_k)
\end{equation}
for every $k\geq 1$. Then there are constants $c,E>0$ such that
\begin{equation}\label{eq:gap-conclusion}
  |z-\beta|\geq c\exp\bigl(-E\mathsf h(\beta)\bigr)
\end{equation}
for every real algebraic number $\beta$ of degree at most $N$ which is
distinct from every term of $(a_k)$.
\end{theorem}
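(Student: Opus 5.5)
The plan is to follow LeVeque's classical argument: compare $\beta$ with one well-chosen term $a_k$ of the sequence. Liouville's inequality \eqref{eq:separation} bounds $|a_k-\beta|$ from below, and the first hypothesis in \eqref{eq:gap-hypotheses} makes $|z-a_k|$ negligible in comparison. The second hypothesis in \eqref{eq:gap-hypotheses} keeps $\mathsf h(a_k)$ linearly bounded in terms of $\mathsf h(\beta)$.

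\emph{Liouville's inequality in Weil heights.} Combining \eqref{eq:separation} with \eqref{eq:weil-heights}, and using that all degrees involved are at most $N$, gives a constant $c_0=c_0(N)>0$ with
\[
  |\alpha-\beta|\geq c_0\exp\bigl(-N^2(\mathsf h(\alpha)+\mathsf h(\beta))\bigr)
\]
for distinct real algebraic $\alpha,\beta$ of degree at most $N$.

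\emph{Choice of the index.} Since $u_k\to\infty$, fix $k_0$ with $u_k\geq 4N^2$ for all $k\geq k_0$. Given $\beta$, let $K>0$ be a large constant, chosen below independently of $\beta$. Let $k\geq k_0$ be the least index with
\[
  u_k\mathsf h(a_k)\geq 4N^2\mathsf h(\beta)+K .
\]
Such $k$ exists because $\mathsf h(a_k)\to\infty$ and $u_k\to\infty$.

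\emph{Upper bound for $\mathsf h(a_k)$.} If $k>k_0$, minimality gives $u_{k-1}\mathsf h(a_{k-1})<4N^2\mathsf h(\beta)+K$. The second hypothesis in \eqref{eq:gap-hypotheses} then yields
\[
  \mathsf h(a_k)\leq A\bigl(4N^2\mathsf h(\beta)+K\bigr).
\]
If $k=k_0$, then $\mathsf h(a_k)$ is a fixed constant. In both cases
\[
  \mathsf h(a_k)\leq 4AN^2\mathsf h(\beta)+O(1).
\]

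\emph{Negligibility of $|z-a_k|$.} Splitting $u_k\mathsf h(a_k)$ into two halves and using $u_k\geq 4N^2$ on one half gives
\[
  u_k\mathsf h(a_k)\geq 2N^2\mathsf h(a_k)+2N^2\mathsf h(\beta)+K/2 .
\]
Hence
\[
  |z-a_k|\leq Ce^{-K/2}\exp\bigl(-2N^2(\mathsf h(a_k)+\mathsf h(\beta))\bigr).
\]
Choosing $K$ so large that $Ce^{-K/2}\leq c_0/2$, this is at most half of the Liouville lower bound for $|a_k-\beta|$. Here we use $\beta\neq a_k$.

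\emph{Conclusion.} The triangle inequality and the bound on $\mathsf h(a_k)$ give
\[
  |z-\beta|\geq \tfrac{c_0}{2}\exp\bigl(-N^2(\mathsf h(a_k)+\mathsf h(\beta))\bigr)\geq c\exp(-E\mathsf h(\beta)),
\]
with $E=N^2(1+4AN^2)$. The additive $O(1)$ in the height bound is absorbed into $c$.

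\emph{Main obstacle.} The delicate point is the choice of $k$. A naive threshold of the form $u_k\mathsf h(a_k)>M\mathsf h(\beta)$ fails: the gap hypothesis only gives $\mathsf h(a_k)\ll M\mathsf h(\beta)$, so the Liouville exponent grows with $M$ as fast as the approximation exponent, and one can never win. The remedy is to let the factor $u_k\geq 4N^2$ absorb the term $N^2\mathsf h(a_k)$ itself. Then $M$ only needs to beat $N^2\mathsf h(\beta)$, which is why the threshold $4N^2$ works.

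Finally, $\beta$ of small height causes no difficulty. The constant $K$ is independent of $\beta$, so the argument applies to every admissible $\beta$. Should uniformity fail for $\beta$ of bounded height, Northcott's theorem leaves only finitely many such $\beta$. Since $z$ is transcendental, each of these has $|z-\beta|>0$, so shrinking $c$ handles them.
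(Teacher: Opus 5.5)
Your proof is correct. Note, though, that the paper gives no argument of its own here: it imports the real case of Theorem~3.1 of Bilu--Marques--Moreira, and only remarks that such gap principles go back to LeVeque.

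What you supply is a self-contained LeVeque-type proof of exactly the version the paper uses. It needs only Liouville's inequality \eqref{eq:separation} and the height comparison \eqref{eq:weil-heights}. The key step is sound. Take the least $k\geq k_0$ with $u_k\mathsf h(a_k)\geq 4N^2\mathsf h(\beta)+K$. Minimality of $k$, together with the second inequality in \eqref{eq:gap-hypotheses}, bounds $\mathsf h(a_k)$ linearly in $\mathsf h(\beta)$. Meanwhile half of $u_k\mathsf h(a_k)$, using $u_k\geq 4N^2$, absorbs the Liouville term $N^2\mathsf h(a_k)$, which gives $|z-a_k|\leq\frac12|a_k-\beta|$. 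Your diagnosis of why a naive threshold $u_k\mathsf h(a_k)>M\mathsf h(\beta)$ fails is also accurate.

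As for what each route buys: the citation keeps the paper short and rests on a more general published result. Your route makes the paper self-contained and yields the explicit exponent $E=N^2(1+4AN^2)$. It also shows that the transcendence of $z$ is never used, and that the convergence $a_k\to z$ follows from the first inequality in \eqref{eq:gap-hypotheses}.

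Your final Northcott paragraph is superfluous. Neither $k_0$ nor $K$ depends on $\beta$, so the estimate is already uniform, including for $\beta$ of small height.
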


For bounded degrees,~\eqref{eq:weil-heights} turns
\eqref{eq:gap-conclusion} into a uniform lower bound of the form
$c'H(\beta)^{-E'}$. The height divergence follows from Northcott's theorem
and the transcendence of the limit; we retain it explicitly among the
hypotheses. No monotonicity of the heights is required.

\section{Algebraic approximation under rational maps}

Throughout this section, let $R=P/Q\in\overline{\Q}(x)\cap\R(x)$ be
nonconstant, with $P,Q$ coprime and $Q$ monic. Put $K=K_R$,
$e=[K:\Q]$, and $d=\deg R$, and let $L$ be the normal closure of $K/\Q$.
For an embedding $\sigma:K\hookrightarrow L$, write $R^\sigma$ for the
rational function obtained by applying $\sigma$ to its coefficients.
The normalization makes $P,Q$ unique. Thus $R^\sigma=R^\tau$ forces
equality on every coefficient of $P,Q$, and hence $\sigma=\tau$.

If $I$ is a nonempty bounded open interval whose closure contains no pole
or critical point of $R$, there are constants $0<c_0\leq M$ such that
\begin{equation}\label{eq:lipschitz}
  c_0|x-y|\leq|R(x)-R(y)|\leq M|x-y|
  \qquad(x,y\in I).
\end{equation}
Indeed, $R'$ is continuous and nonzero on $\overline I$. Thus $R|_I$ is
one-to-one, $R(I)$ is open, and its inverse $S:R(I)\to I$ is
$c_0^{-1}$-Lipschitz.

\subsection{Separating the conjugates}

To obtain the full degrees in the two constructions below, we must rule
out coincidences among conjugates of certain translated rational
functions. We do this by choosing the auxiliary algebraic number to have
the same degree over $L$ as over $\Q$.

\begin{lemma}\label{lem:conjugates}
For every integer $n\geq 1$, there is a real algebraic number $\theta$ of
degree $n$ whose minimal polynomial remains irreducible over $L$. For
any such $\theta$, with conjugates $\theta_1,\ldots,\theta_n$, the
functions in each of the two families
\begin{equation}\label{eq:conjugate-families}
  \bigl\{R^\sigma(X+\theta_i)\bigr\}_{\sigma,i},
  \qquad
  \bigl\{R^\sigma(X)-\theta_i\bigr\}_{\sigma,i}
\end{equation}
are pairwise distinct, where $\sigma$ ranges over the embeddings
$K\hookrightarrow L$ and $1\leq i\leq n$.
\end{lemma}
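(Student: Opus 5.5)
The plan is to build $\theta$ explicitly with an Eisenstein polynomial at a prime unramified in $L$. Distinctness in both families will then reduce to the observation that no two conjugates of $\theta$ can differ by a nonzero element of $L$.

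\emph{Existence.} Since $L/\Q$ is a finite extension, only finitely many primes ramify in it. Choose a prime $p$ that is unramified in $L$, and put
\[
f(X)=X^n+pX-p\in\Z[X].
\]
This polynomial is Eisenstein at $p$. Let $\mathfrak p$ be a prime of $\mathcal O_L$ above $p$. Because $p$ is unramified, $v_{\mathfrak p}(p)=1$, so $f$ is still Eisenstein at $\mathfrak p$ over $\mathcal O_L$. Hence $f$ is irreducible over $L$, and in particular over $\Q$. Since $f(0)=-p<0$ and $f(x)\to+\infty$ as $x\to+\infty$, $f$ has a positive real root. We take $\theta$ to be that root.

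\emph{Key observation.} Let $\theta$ be as in the statement, with conjugates $\theta_1,\dots,\theta_n$, and suppose $\theta_j=\theta_i+\delta$ with $\delta\in L$. We claim $\delta=0$.
\begin{itemize}
\item Since the minimal polynomial is irreducible over $L$, there is an automorphism $\rho$ of $\mathbb C$ fixing $L$ with $\rho(\theta_i)=\theta_j$.
\item Then $\rho(\theta_j)=\theta_j+\delta=\theta_i+2\delta$ is again a root.
\item Iterating, $\theta_i+k\delta$ is a root for every $k\geq0$.
\item A polynomial has only finitely many roots, so $\delta=0$.
\end{itemize}

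\emph{Second family.} Suppose $R^\sigma(X)-\theta_i=R^\tau(X)-\theta_j$. Then $R^\sigma-R^\tau$ is the constant $c=\theta_i-\theta_j$. This rational function has coefficients in $L$, so $c\in L$. By the key observation, $c=0$, so $\theta_i=\theta_j$ and therefore $i=j$. It follows that $R^\sigma=R^\tau$, and the normalization fixed at the start of the section gives $\sigma=\tau$.

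\emph{First family.} Suppose $R^\sigma(X+\theta_i)=R^\tau(X+\theta_j)$. Then $R^\sigma(X)=R^\tau(X+\delta)$ with $\delta=\theta_j-\theta_i$.

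First, $\delta$ is unique for the pair $(\sigma,\tau)$. If $\delta'$ also worked, then $R^\tau(X+\delta-\delta')=R^\tau(X)$. A nonconstant rational function has a finite nonempty set of poles in $\mathbb C\cup\{\infty\}$, or finite fibres, so it admits no nonzero period. Hence $\delta=\delta'$.

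Second, $\delta\in L$. Any automorphism of $\mathbb C$ fixing $L$ fixes $R^\sigma$ and $R^\tau$. It therefore sends a valid shift $\delta$ to another valid shift, which by uniqueness is $\delta$ itself. So $\delta$ is fixed by $\mathrm{Aut}(\mathbb C/L)$, and hence $\delta\in L$.

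The key observation now gives $\delta=0$, so $i=j$. Then $R^\sigma=R^\tau$, and $\sigma=\tau$ by the normalization.

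\emph{Main obstacle.} The delicate step is in the first family: passing from $R^\sigma(X)=R^\tau(X+\delta)$ to $\delta\in L$. This rests on uniqueness of the shift, which in turn rests on the non-periodicity of nonconstant rational functions. Everything else is formal once the Eisenstein construction is in place.
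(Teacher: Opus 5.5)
Your proof is correct and follows the paper's argument: an Eisenstein polynomial at a prime unramified in $L$, the fact that distinct conjugates of $\theta$ cannot differ by an element of $L$, non-periodicity plus Galois invariance of the shift for the first family, and the constant-in-$L$ argument for the second. The differences are cosmetic. The paper uses $X^n-p$ (so $\theta=p^{1/n}$), and it proves the key observation by deducing $g(X+a)=g(X)$ and comparing the coefficients of $X^{n-1}$ to get $na=0$. You instead iterate an automorphism to produce infinitely many roots $\theta_i+k\delta$, and both versions are equally valid.
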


\begin{proof}
Choose a rational prime $p$ unramified in $L$ and take the positive real
root $\theta=p^{1/n}$. At every prime of $L$ above $p$, the valuation of
$p$ is one. Thus $X^n-p$ is Eisenstein at such a prime and is irreducible
over $L$.

Now let $\theta$ be any number as in the statement, and let
$g\in\Q[X]$ be its monic minimal polynomial. We first show that
\begin{equation}\label{eq:conjugate-difference}
  \theta_j-\theta_i\in L\quad\Longrightarrow\quad i=j.
\end{equation}
Indeed, if $a=\theta_j-\theta_i\in L$, then $g(X)$ and $g(X+a)$ have
the common root $\theta_i$. Since $g$ is irreducible over $L$ and both
polynomials are monic of degree $n$, we obtain $g(X+a)=g(X)$.
Comparing the coefficients of $X^{n-1}$ gives $na=0$, so $a=0$.
The conjugates are distinct, and~\eqref{eq:conjugate-difference} follows.

We also need a simple observation about translations. A nonconstant
rational function has no nonzero additive period: a finite pole would
generate infinitely many poles, while a nonconstant polynomial cannot be
periodic. Suppose that $f,g\in L(X)$ are nonconstant and
$f(X+a)=g(X)$ for some $a\in\overline{\Q}$. For every
$\rho\in\operatorname{Gal}(\overline{\Q}/L)$, we have
\[
  f(X+\rho(a))=g(X)=f(X+a).
\]
Hence $\rho(a)-a$ is an additive period of $f$, and therefore
$\rho(a)=a$. It follows that $a\in L$.

Suppose now that two members of the first family in
\eqref{eq:conjugate-families} coincide. After translating the variable,
this gives
\[
  R^\sigma(X)=R^\tau(X+\theta_j-\theta_i).
\]
The preceding observation implies $\theta_j-\theta_i\in L$.
By~\eqref{eq:conjugate-difference}, $i=j$, and then the minimality of
$K$ gives $\sigma=\tau$.

Finally, an equality in the second family gives
\[
  R^\sigma(X)-R^\tau(X)=\theta_i-\theta_j.
\]
The left-hand side belongs to $L(X)$ and is constant, so this constant
lies in $L$. Once again,~\eqref{eq:conjugate-difference} gives $i=j$,
and then $\sigma=\tau$.
\end{proof}

\subsection{Rational translates}

We first construct nearby points of degree $m$ whose images have the
full degree $em$. For $m\geq 1$, put
\[
  A_m(I)=\{\alpha\in I:\deg\alpha=m,\ \deg R(\alpha)=em\}.
\]

\begin{lemma}\label{lem:translates}
Let $I$ be as above, let $m\geq 1$, and let $b\in I$. For every
$0<\sigma<1$, there are elements $\alpha\in A_m(I)$ of arbitrarily
large height such that
\begin{equation}\label{eq:translate-height-distance}
  0<|b-\alpha|\leq\exp\bigl(-\sigma\mathsf h(\alpha)\bigr).
\end{equation}
\end{lemma}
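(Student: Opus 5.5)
Fix a real algebraic number $\theta$ of degree $m$ whose minimal polynomial stays irreducible over $L$, as provided by Lemma~\ref{lem:conjugates}. The candidates will be the rational translates $\alpha=t+\theta$ with $t\in\Q$ close to $b-\theta$. Such $\alpha$ always has degree $m$, since $\Q(\alpha)=\Q(\theta)$, and it lies in $I$ once $t$ is close enough to $b-\theta$. What must be shown is that $\deg R(t+\theta)=em$ for all but few $t$, and that suitable $t$ exist with good approximation.

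For the degree, consider the polynomial
\[
F(T,Y)=\prod_{\sigma,i}\Bigl(P^\sigma(T+\theta_i)-Y\,Q^\sigma(T+\theta_i)\Bigr),
\]
where $\sigma$ runs over the $e$ embeddings $K\hookrightarrow L$ and $i$ over $1,\dots,m$. This product is Galois-stable, so it has coefficients in $\Q$, and for each $t\in\Q$ the value $R(t+\theta)$ is one of its roots in $Y$. A cleaner route avoids Hilbert irreducibility altogether. For each pair $(\sigma,i)\ne(\tau,j)$, Lemma~\ref{lem:conjugates} says the rational functions $R^\sigma(X+\theta_i)$ and $R^\tau(X+\theta_j)$ are distinct. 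Hence they agree at only finitely many $X=t$, namely the roots of a nonzero numerator of bounded degree. Outside this finite exceptional set, the $em$ values $R^\sigma(t+\theta_i)$ are pairwise distinct.

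These $em$ values are exactly the conjugates of $R(t+\theta)$. Indeed, every embedding of $K(\theta)$ into $\overline\Q$ restricts to some pair $(\sigma,\theta_i)$, and all $em$ such pairs occur, because $[K(\theta):\Q]=em$ by the irreducibility of the minimal polynomial of $\theta$ over $L\supseteq K$. Since $t$ is rational and therefore fixed by every embedding, the image of $R(t+\theta)$ under the embedding indexed by $(\sigma,i)$ is $R^\sigma(t+\theta_i)$. Distinctness of these images gives $\deg R(t+\theta)=em$, so $\alpha\in A_m(I)$.

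For the approximation, take any $\sigma<\sigma'<1$ and apply Dirichlet to $c=b-\theta$. There are infinitely many $t=a/q$ with $|c-t|<q^{-2}$, which is far better than needed. Alternatively, take $t$ with denominator $q$ and $|c-t|\le 1/q$, so that $H(t)\asymp q$. In either case, excluding the finitely many bad $t$ leaves infinitely many valid choices with $H(t)\to\infty$.

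Heights are controlled by~\eqref{eq:sum-height}:
\[
\mathsf h(\alpha)\le \mathsf h(t)+\mathsf h(\theta)+\log 2=\log q+O(1).
\]
Note that $\alpha\ne b$ is automatic only when $b$ is not of the form $t+\theta$; this holds for all but at most one $t$, so it is harmless. Then
\[
|b-\alpha|=|c-t|\le q^{-1}\le \exp\bigl(-\mathsf h(\alpha)+O(1)\bigr)\le \exp\bigl(-\sigma\mathsf h(\alpha)\bigr)
\]
once $\mathsf h(\alpha)$ is large, since $\sigma<1$. Moreover $\mathsf h(\alpha)\ge \mathsf h(t)-\mathsf h(\theta)-\log 2\to\infty$, so the heights are arbitrarily large.

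The main obstacle is the conjugate-counting step: identifying the conjugates of $R(t+\theta)$ with the family $\{R^\sigma(t+\theta_i)\}$ and confirming that there are exactly $em$ embeddings of $K(\theta)$. Both facts rest on the irreducibility of the minimal polynomial of $\theta$ over $L$, and the separation itself comes from the first family in~\eqref{eq:conjugate-families}.
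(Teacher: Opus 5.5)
Your proposal is correct and follows essentially the same route as the paper. It takes rational translates $\theta+t$ of a number $\theta$ from Lemma~\ref{lem:conjugates}, identifies the $em$ conjugates of $R(\theta+t)$ with the values $R^\sigma(\theta_i+t)$, and uses the first family in \eqref{eq:conjugate-families} to get a finite exceptional set, followed by denominator-$q$ approximations with $\mathsf h(\alpha)\leq\log q+O(1)$. The caveats are minor: the Dirichlet $q^{-2}$ variant requires $b-\theta\notin\Q$, which can fail, so keep the denominator-$q$ choice, where only $H(t)\ll q$ (not $\asymp$) is needed, and the paper's explicit $t_q=(\lfloor q(b-\theta)\rfloor+1)/q$ makes $\alpha\neq b$ automatic.
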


\begin{proof}
Choose $\theta$ of degree $m$ as in Lemma~\ref{lem:conjugates}. Its
minimal polynomial remains irreducible over $L$, and hence over $K$.
Therefore
\[
  [K(\theta):\Q]
  =[K(\theta):K]\,[K:\Q]=em.
\]
Every embedding $\tau:K\hookrightarrow L$ extends to $m$ embeddings of
$K(\theta)$, sending $\theta$ to each of its conjugates
$\theta_1,\ldots,\theta_m$. For $t\in\Q$, the conjugate values of
$R(\theta+t)$ are consequently
\[
  R^\tau(\theta_i+t),
  \qquad \tau:K\hookrightarrow L,\quad 1\leq i\leq m.
\]

By Lemma~\ref{lem:conjugates}, the corresponding rational functions of
$t$ are pairwise distinct. Each equality between two of their values
therefore occurs at only finitely many specializations. Excluding these
specializations and the poles, we obtain a finite exceptional set
$\mathcal E\subset\overline{\Q}$ outside which all $em$ values are
finite and distinct. Since $R(\theta+t)\in K(\theta)$, this proves
\begin{equation}\label{eq:translate-degrees}
  \deg(\theta+t)=m,
  \qquad \deg R(\theta+t)=em
  \qquad(t\in\Q\setminus\mathcal E).
\end{equation}
Here the first equality follows from $\Q(\theta+t)=\Q(\theta)$.

For every integer $q\geq 1$, set
\[
  t_q=\frac{\lfloor q(b-\theta)\rfloor+1}{q},
  \qquad \alpha_q=\theta+t_q.
\]
Then
\begin{equation}\label{eq:translate-distance}
  0<\alpha_q-b\leq q^{-1},
  \qquad H(t_q)\ll_{b,\theta}q.
\end{equation}
The first inequality is strict even when $q(b-\theta)$ is an integer.
Thus $t_q\to b-\theta$ without equality, so the finite set $\mathcal E$
is avoided for all sufficiently large $q$. Also, $\alpha_q\in I$
eventually. Hence $\alpha_q\in A_m(I)$ for all sufficiently large $q$.

By Northcott's theorem, $\mathsf h(\alpha_q)\to\infty$. Otherwise a
bounded-height subsequence would take values in a finite set, which is
impossible since $\alpha_q\to b$ and $\alpha_q\ne b$. On the other hand,
\eqref{eq:sum-height} and~\eqref{eq:translate-distance} give
\[
  \mathsf h(\alpha_q)
  \leq\mathsf h(\theta)+\log H(t_q)+\log2
  \leq\log q+O_{b,\theta}(1).
\]
For a suitable constant $C>0$, we therefore have
\[
  0<|\alpha_q-b|\exp\bigl(\sigma\mathsf h(\alpha_q)\bigr)
  \leq Cq^{\sigma-1}<1
\]
for all sufficiently large $q$, since $\sigma<1$. This proves
\eqref{eq:translate-height-distance}.
\end{proof}

\subsection{Full-degree preimages}

We next construct nearby points whose images have degree $r$, while the
points themselves have the full degree $edr$.

Fix an integer $r\geq 1$, choose $\vartheta$ of degree $r$ as in
Lemma~\ref{lem:conjugates}, and put $E=K(\vartheta)$. Then
$[E:\Q]=er$. Consider the norm polynomial
\begin{equation}\label{eq:generic-polynomial}
  \Phi(T,X)
  =\operatorname{N}_{E/\Q}\bigl(P(X)-(T+\vartheta)Q(X)\bigr)
  \in\Q[T,X].
\end{equation}
Here the norm is the product over all $\Q$-embeddings of $E$, acting on
the coefficients and fixing $T,X$. Writing
$\vartheta_1,\ldots,\vartheta_r$ for the conjugates of $\vartheta$, we
may express it as
\[
  \Phi(T,X)
  =\prod_{\sigma:K\hookrightarrow L}\prod_{i=1}^r
    \bigl(P^\sigma(X)-(T+\vartheta_i)Q^\sigma(X)\bigr).
\]
Each factor is nonzero and has degree at most $d$ in $X$. Thus $\Phi$
is nonzero and $\deg_X\Phi\leq edr$. The next lemma supplies the irreducibility needed for local Hilbert
specialization to preserve the full degree $edr$.

\begin{lemma}\label{lem:generic-irreducible}
The polynomial $\Phi(T,X)$ is irreducible in $\Q(T)[X]$ and has degree
$edr$ in $X$.
\end{lemma}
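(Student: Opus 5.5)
We will prove irreducibility of $\Phi$ by viewing it as a polynomial in $T$ over $\Q(X)$ (equivalently, in $\Q[X][T]$) and using Gauss's lemma.

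\emph{Step 1: the factors in $T$.} Each factor $P^\sigma(X)-(T+\vartheta_i)Q^\sigma(X)$ is linear in $T$, with root
\[
T=R^\sigma(X)-\vartheta_i\in L(\vartheta_1,\ldots,\vartheta_r)(X).
\]
So, as a polynomial in $T$ over $\Q(X)$, $\Phi$ has leading coefficient $\pm\prod_\sigma (Q^\sigma)^r$ and roots $R^\sigma(X)-\vartheta_i$. These roots are pairwise distinct by the second family in Lemma~\ref{lem:conjugates}.

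\emph{Step 2: transitivity of the Galois action.} Let $G=\operatorname{Gal}(\overline{\Q}/\Q)$ act on coefficients, fixing $X$. The root $R(X)-\vartheta$ has $G$-orbit $\{R^\sigma(X)-\vartheta_i\}$, since the embeddings of $E=K(\vartheta)$ are exactly the pairs $(\sigma,i)$. This uses $[E:\Q]=er$ and that $\vartheta$ stays irreducible over $L\supseteq K$. Hence $\mathbb{Q}(X)(R(X)-\vartheta)$ has degree $er$ over $\Q(X)$, and the minimal polynomial of $R(X)-\vartheta$ over $\Q(X)$ is $\prod_{\sigma,i}(T-R^\sigma(X)+\vartheta_i)$. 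Therefore $\Phi=c(X)\cdot\Psi(T)$, where $\Psi$ is irreducible in $\Q(X)[T]$ and $c(X)=\pm\prod_\sigma (Q^\sigma)^r\in\Q[X]$.

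\emph{Step 3: content in $\Q[X]$.} By Gauss's lemma, $\Phi$ is irreducible in $\Q[T,X]$ once its content as a polynomial in $T$ over $\Q[X]$ is constant. Suppose an irreducible $\pi(X)\in\Q[X]$ divides every $T$-coefficient of $\Phi$. Then $\pi$ divides $\Phi$ in $\overline{\Q}[X][T]$. Take a root $x_0$ of $\pi$. Every factor $P^\sigma(x_0)-(T+\vartheta_i)Q^\sigma(x_0)$ is a nonzero polynomial in $T$, because $P^\sigma,Q^\sigma$ are coprime and so not both zero at $x_0$. Their product is then nonzero, a contradiction. Hence $\Phi$ is irreducible in $\Q[T,X]$ and has positive degree in $X$ (as $d\ge1$). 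Gauss's lemma, applied now over $\Q[T]$, gives irreducibility in $\Q(T)[X]$.

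\emph{Step 4: the degree.} We show $\deg_X\Phi=edr$, i.e.\ that each factor has $X$-degree exactly $d$ with nonvanishing product of leading coefficients in $\Q[T]$. If $\deg P^\sigma\ne\deg Q^\sigma$, the factor has degree $d$ with leading coefficient either a nonzero constant or a nonzero multiple of $(T+\vartheta_i)$. If $\deg P=\deg Q=d$, the leading coefficient is $p_\sigma-(T+\vartheta_i)$, which is nonzero as a polynomial in $T$. In every case the product of the leading coefficients is a nonzero element of $\overline{\Q}[T]$, so $\deg_X\Phi=edr$.

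The main obstacle is Step 2: we must check that the $G$-orbit of $R(X)-\vartheta$ is exactly the full set of $er$ distinct roots. This is where the choice of $\vartheta$ irreducible over $L$ and the distinctness in Lemma~\ref{lem:conjugates} are used. Step 3 is where coprimality of $P,Q$ is needed.
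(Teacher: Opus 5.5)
Your proof is correct, but it is organized differently from the paper's.

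\textbf{Where the two proofs agree.} The first half is the same in substance. Your Step~2 says that the $\operatorname{Gal}(\overline{\Q}/\Q)$-orbit of $R(X)-\vartheta$ consists of the $er$ pairwise distinct functions $R^\sigma(X)-\vartheta_i$. This is exactly the paper's observation that $[\Q(x,\tau):\Q(x)]=er$, hence $\Q(x,\tau)=E(x)$. Both rest on the second family in Lemma~\ref{lem:conjugates} and on $\vartheta$ remaining irreducible over $L$.

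\textbf{Where they diverge.} The difference is in how one passes from the variable $T$ to the variable $X$.
\begin{itemize}
\item The paper stays in the function field. It computes $[\Q(\tau,x):\Q(\tau)]=[E(x):E(\tau)]\,[E(\tau):\Q(\tau)]=d\cdot er$, using the degree formula $[E(x):E(R(x))]=d$. Since $\Phi(\tau,X)$ is nonzero, vanishes at $x$, and has degree at most $edr$, it must be a scalar multiple of the minimal polynomial of $x$ over $\Q(\tau)$. Irreducibility and the exact degree $edr$ thus come out together from one degree count.
\item You use the fact that $\Phi$ splits into linear factors in $T$, so irreducibility in $\Q(X)[T]$ follows at once from transitivity of the Galois action. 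You then transfer this to $\Q[T,X]$ and to $\Q(T)[X]$ by Gauss's lemma. That requires a separate content check (Step~3, where coprimality of $P^\sigma,Q^\sigma$ enters) and a separate leading-coefficient computation for $\deg_X\Phi$ (Step~4).
\end{itemize}

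\textbf{What each buys.} Your version is more elementary and shows explicitly where the coprimality of $P,Q$ is used. In the paper this is hidden inside the degree formula for rational maps, whose standard proof is essentially your content argument applied to $P(X)-R(x)Q(X)$. The paper's version is shorter and avoids your case analysis on $\deg P$ versus $\deg Q$.

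\textbf{A small point of ordering.} At the end of Step~3 you invoke $\deg_X\Phi\geq 1$, citing only $d\geq 1$. That positivity is really supplied by Step~4, since one needs the product of the factors' leading coefficients to be nonzero. So Step~4 logically precedes the final application of Gauss's lemma.
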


\begin{proof}
Let $x$ be transcendental over $E$, and set $\tau=R(x)-\vartheta$.
Since $R$ is nonconstant, $\tau$ is transcendental over $E$. The $er$
embeddings of $E(x)$ over $\Q(x)$ send $\tau$ to the values
\[
  R^\sigma(x)-\vartheta_i,
  \qquad \sigma:K\hookrightarrow L,\quad 1\leq i\leq r.
\]
These are pairwise distinct by Lemma~\ref{lem:conjugates}. Consequently,
\[
  [\Q(x,\tau):\Q(x)]=er=[E(x):\Q(x)].
\]
Together with the inclusion $\Q(x,\tau)\subseteq E(x)$, this proves
\begin{equation}\label{eq:primitive-function-field}
  \Q(x,\tau)=E(x).
\end{equation}

We now compute the degree over $\Q(\tau)$. Since
$E(\tau)=E(R(x))$, the degree formula for rational maps gives
$[E(x):E(\tau)]=d$. Also, adjoining the transcendental element $\tau$
preserves the degree of the constant field extension, so
$[E(\tau):\Q(\tau)]=[E:\Q]=er$. Using
\eqref{eq:primitive-function-field} and the tower law, we obtain
\begin{equation}\label{eq:generic-degree}
  [\Q(\tau,x):\Q(\tau)]
  =[E(x):E(\tau)]\,[E(\tau):\Q(\tau)]
  =edr.
\end{equation}

By the definition of $\tau$, the factor
$P(x)-(\tau+\vartheta)Q(x)$ vanishes, and hence $\Phi(\tau,x)=0$.
The polynomial $\Phi(\tau,X)$ is nonzero, because $\tau$ is
transcendental over $\Q$, and its degree in $X$ is at most $edr$.
By~\eqref{eq:generic-degree}, the minimal polynomial of $x$ over
$\Q(\tau)$ has degree $edr$ and divides $\Phi(\tau,X)$. The two
polynomials are therefore scalar multiples of one another. Identifying
$\Q(T)$ with $\Q(\tau)$ proves both assertions.
\end{proof}

Combining the preceding lemma with local Hilbert specialization and a
local inverse of $R$, we obtain preimages of full degree with an
approximation bound in terms of height.

\begin{proposition}\label{prop:full-degree}
Let $x_0\in\R$ be neither a pole nor a critical point of $R$, and let
$r\geq 1$ be an integer. For every $0<\sigma<d$, there are real algebraic numbers
$\beta$, not poles of $R$ and of arbitrarily large height, such that
\begin{equation}\label{eq:full-degree-assertion}
\begin{gathered}
  \deg\beta=edr,\qquad \deg R(\beta)=r,\\
  0<|\beta-x_0|\leq\exp\bigl(-\sigma\mathsf h(\beta)\bigr).
\end{gathered}
\end{equation}
For these numbers,
\begin{equation}\label{eq:full-degree-height}
  H(R(\beta))\asymp_{R,r}H(\beta)^{1/e}.
\end{equation}
\end{proposition}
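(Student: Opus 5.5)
Put $y_0=R(x_0)-\vartheta\in\R$, where $\vartheta$ is the real number of degree $r$ fixed above; it is real by Lemma~\ref{lem:conjugates}. By Lemma~\ref{lem:generic-irreducible} and Lemma~\ref{lem:local-hilbert}, applied to $F=\Phi$ with $c=y_0$ and a parameter $\lambda\in(0,1)$ chosen below, we get rationals $t$ of arbitrarily large height with
\[
0<|t-y_0|\leq H(t)^{-\lambda}
\]
and with $\Phi(t,X)$ irreducible over $\Q$ of degree $edr$.

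Fix an interval $I\ni x_0$ as in~\eqref{eq:lipschitz}, with local inverse $S:R(I)\to I$. For $t$ close to $y_0$, the point $t+\vartheta$ lies in $R(I)$. Set $\beta=S(t+\vartheta)$. Then $\beta$ is real, is not a pole, and satisfies $P(\beta)-(t+\vartheta)Q(\beta)=0$. Hence $\Phi(t,\beta)=0$, and irreducibility gives $\deg\beta=edr$. Also $R(\beta)=t+\vartheta$, and since $\Q(t+\vartheta)=\Q(\vartheta)$ we get $\deg R(\beta)=r$. We have $\beta\neq x_0$ because $t\neq y_0$ and $R$ is injective on $I$. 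By~\eqref{eq:lipschitz},
\[
|\beta-x_0|\leq c_0^{-1}|t-y_0|\leq c_0^{-1}H(t)^{-\lambda}.
\]

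It remains to compare heights. By~\eqref{eq:weil-heights},
\[
\mathsf h(\beta)=\tfrac1d\,\mathsf h(t+\vartheta)+O_R(1).
\]
By~\eqref{eq:sum-height}, $\mathsf h(t+\vartheta)\leq\log H(t)+O(1)$. Therefore $\mathsf h(\beta)\leq d^{-1}\log H(t)+O(1)$, that is, $H(t)\gg\exp(d\,\mathsf h(\beta))$. This gives
\[
|\beta-x_0|\ll\exp\bigl(-\lambda d\,\mathsf h(\beta)\bigr).
\]
Given $\sigma<d$, choose $\lambda$ with $\sigma/d<\lambda<1$. The implied constant is then absorbed once $\mathsf h(\beta)$ is large. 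This holds because $\beta\to x_0$ with $\beta\neq x_0$ and bounded degree, so Northcott's theorem gives $\mathsf h(\beta)\to\infty$. Alternatively, $\mathsf h(t+\vartheta)\geq\mathsf h(t)-\mathsf h(\vartheta)-\log 2$ with $H(t)\to\infty$.

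Finally, \eqref{eq:full-degree-height} follows from~\eqref{eq:height-map} with $a=edr$, $b=r$ and $D=edr$. This gives
\[
H(R(\beta))\asymp H(\beta)^{dr/(edr)}=H(\beta)^{1/e}.
\]

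The main obstacle is the irreducibility and degree count for $\Phi$, and that is already supplied by Lemma~\ref{lem:generic-irreducible}. The remaining care is only to make sure the approximation exponent $\lambda d$ can exceed $\sigma$. This works because the height of $\beta$ is $1/d$ times that of $t$, up to bounded error. It explains why the statement allows any $\sigma<d$ rather than only $\sigma<1$.
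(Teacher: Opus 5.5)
Your proposal is correct and follows the paper's proof essentially step for step: you specialize $\Phi$ near $R(x_0)-\vartheta$ via local Hilbert specialization, pull back through the local inverse $S$, read off the degrees from irreducibility of $\Phi(t,X)$, and compare $\mathsf h(\beta)$ with $d^{-1}\log H(t)$ using $\sigma/d<\lambda<1$. The only cosmetic difference is that you use just the upper bound on $\mathsf h(\beta)$ together with a separate divergence argument, whereas the paper records the two-sided estimate $\mathsf h(\beta)=d^{-1}\log H(t)+O(1)$ and bounds the error directly by $C\,H(t)^{\sigma/d-\lambda}$.
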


\begin{proof}
Choose a bounded open interval $I$ containing $x_0$ whose closure contains
no pole or critical point of $R$, and let $S:R(I)\to I$ be the inverse
from~\eqref{eq:lipschitz}. Fix $0<\sigma<d$. With $\vartheta$ and
$\Phi$ as above, put $c=R(x_0)-\vartheta$ and choose
$\sigma/d<\lambda<1$. By Lemmas~\ref{lem:generic-irreducible} and~\ref{lem:local-hilbert},
there are rational numbers $t$ of arbitrarily large height such that
\begin{equation}\label{eq:t-approximation}
  0<|t-c|\leq H(t)^{-\lambda}
\end{equation}
and $\Phi(t,X)$ is irreducible over $\Q$ of degree $edr$.

Set $\delta=\vartheta+t$. Since $t\in\Q$, we have
$\Q(\delta)=\Q(\vartheta)$ and hence $\deg\delta=r$. Moreover,
\[
  0<|\delta-R(x_0)|=|t-c|\leq H(t)^{-\lambda}.
\]
Thus $\delta\to R(x_0)$, and $\delta\in R(I)$ for all sufficiently
large heights. Define $\beta=S(\delta)\in I$. Then $R(\beta)=\delta$
and $\beta\ne x_0$. The nonzero polynomial
$P(X)-\delta Q(X)$ vanishes at $\beta$, so $\beta$ is algebraic.
Furthermore,
\[
  P(\beta)-(t+\vartheta)Q(\beta)=0.
\]
This is one factor in the norm defining $\Phi(t,\beta)$, and therefore
$\Phi(t,\beta)=0$. Since $\Phi(t,X)$ is irreducible over $\Q$ of degree
$edr$, we obtain
\begin{equation}\label{eq:preimage-degrees}
  \deg\beta=edr,
  \qquad \deg R(\beta)=r.
\end{equation}
The exponent in~\eqref{eq:height-map} is $dr/(edr)=1/e$, which proves
\eqref{eq:full-degree-height}.

It remains to compare the approximation error with the height of $\beta$.
Apply~\eqref{eq:sum-height} to both $\delta=\vartheta+t$ and
$t=\delta-\vartheta$. Since $\mathsf h(t)=\log H(t)$, this bounds
$\mathsf h(\delta)-\log H(t)$ independently of $t$. Together with
\eqref{eq:weil-heights}, we obtain
\begin{equation}\label{eq:preimage-parameter-height}
\begin{aligned}
  \mathsf h(\delta)&=\log H(t)+O_\vartheta(1),\\
  \mathsf h(\beta)&=\frac1d\log H(t)+O_{R,\vartheta}(1).
\end{aligned}
\end{equation}
In particular, $\mathsf h(\beta)\to\infty$. The Lipschitz property of
$S$ and~\eqref{eq:t-approximation} give
\[
  0<|\beta-x_0|
  \leq c_0^{-1}|\delta-R(x_0)|
  \leq c_0^{-1}H(t)^{-\lambda}.
\]
Hence there is a constant $C>0$, independent of $t$, such that
\[
  0<|\beta-x_0|\exp\bigl(\sigma\mathsf h(\beta)\bigr)
  \leq C H(t)^{\sigma/d-\lambda}<1
\]
for all sufficiently large heights, since $\sigma/d<\lambda$.
This proves~\eqref{eq:full-degree-assertion}. The center $x_0$ is fixed
before the specializations are chosen; no height threshold uniform in
$x_0$ is required.
\end{proof}

\section{Proof of Theorem~\ref{thm:local-spectrum}}

Proposition~\ref{prop:type-bounds} gives one inclusion in
\eqref{eq:local-spectrum}. For the reverse inclusion, fix $R,m,J$ as in
the theorem, write $K=K_R$, $e=[K:\Q]$ and $d=\deg R$, and choose an
integer $r$ with
\begin{equation}\label{eq:admissible-r}
  \left\lceil\frac{m}{ed}\right\rceil\leq r\leq em.
\end{equation}
Since $R$ has only finitely many poles and critical points, we may choose
a nonempty bounded open interval $I$ with $\overline I\subset J$ such
that $R$ has neither on $\overline I$. Let $c_0,M,S$ be as in
\eqref{eq:lipschitz}, put $D=edr$, and define
\begin{equation}\label{eq:sets}
\begin{aligned}
  A&=A_m(I)=\{\alpha\in I:\deg\alpha=m,\ \deg R(\alpha)=em\},\\
  B_r&=\{\beta\in I:\deg\beta=D,\ \deg R(\beta)=r\}.
\end{aligned}
\end{equation}
The admissibility conditions are exactly $D\geq m$ and $em\geq r$.
They ensure that the degrees in the original sequence are never below
$m$, and those in its image are never below $r$.

Taking $\sigma=1/2$ in Lemma~\ref{lem:translates} and
Proposition~\ref{prop:full-degree}, we can approximate every fixed point
$b\in I$ by points $z$ in either $A$ or $B_r$, with arbitrarily large
$\mathsf h(z)$ and
\[
  0<|z-b|\leq\exp\bigl(-\mathsf h(z)/2\bigr).
\]
We now alternate between these two families.

\subsection{The alternating sequence}

Choose $\gamma_1\in A$ with $h_1:=\mathsf h(\gamma_1)\geq 1$.
Having chosen $\gamma_k$, choose $\gamma_{k+1}\in B_r$ when $k$ is odd
and $\gamma_{k+1}\in A$ when $k$ is even. Writing
$h_{k+1}=\mathsf h(\gamma_{k+1})$, we may require
\begin{equation}\label{eq:recursive-choices}
\begin{aligned}
  h_{k+1}&\geq 2k h_k+2\log2,\\
  0<|\gamma_{k+1}-\gamma_k|
  &\leq\exp(-h_{k+1}/2).
\end{aligned}
\end{equation}
At the first step, require also
\[
  2\exp(-h_2/2)<\dist(\gamma_1,\partial I).
\]
Each center is fixed before the next term is chosen, and both local
approximation results allow its height to be arbitrarily large. Thus all
these requirements can be imposed simultaneously.

The first inequality in~\eqref{eq:recursive-choices} ensures that
$h_k\to\infty$ and gives
\[
  \exp(-h_{k+1}/2)\leq\frac12\exp(-h_k/2).
\]
Consequently,
\[
  \sum_{j=k}^{\infty}|\gamma_{j+1}-\gamma_j|
  \leq\sum_{j=k}^{\infty}\exp(-h_{j+1}/2)
  \leq 2\exp(-h_{k+1}/2).
\]
It follows that $(\gamma_k)$ converges to a real number $\xi$, with
\begin{equation}\label{eq:limit-tail}
  |\xi-\gamma_k|
  \leq 2\exp(-h_{k+1}/2)
  \leq\exp(-k h_k)
  \qquad(k\geq 1).
\end{equation}
The last inequality follows again from~\eqref{eq:recursive-choices}.
The extra condition at the first step gives
$|\xi-\gamma_1|<\dist(\gamma_1,\partial I)$, so $\xi\in I\subset J$.

The number $\xi$ is transcendental. Otherwise, put $s=\deg\xi$.
Since $h_k\to\infty$, we have $\gamma_k\ne\xi$ for all sufficiently
large $k$. The degrees of the $\gamma_k$ are at most $D$, so
\eqref{eq:separation} and~\eqref{eq:weil-heights} give
\[
  |\xi-\gamma_k|
  \gg_{\xi,D}H(\gamma_k)^{-s}
  \gg_{\xi,D}\exp(-sD h_k).
\]
For $k>sD$, this contradicts~\eqref{eq:limit-tail} as $k\to\infty$.

\subsection{The type of the limit}

At odd indices, $\deg\gamma_k=m$. Since $\xi$ is transcendental and
$h_k\to\infty$, the estimates in~\eqref{eq:limit-tail} give
approximation exponents tending to infinity along these indices.
By~\eqref{eq:weil-heights},
\begin{equation}\label{eq:source-infinite}
  w_m^*(\xi)=\infty.
\end{equation}
We now use the gap principle to exclude all smaller degrees, retaining
its full conclusion for the analysis of the image.

Set
\[
  u_k=\frac{h_{k+1}}{2h_k}.
\]
By~\eqref{eq:recursive-choices}, $u_k\geq k$, and hence $u_k\to\infty$.
Moreover,
\[
  |\xi-\gamma_k|\leq 2\exp(-u_kh_k),
  \qquad h_{k+1}=2u_kh_k.
\]
We have already proved that $\xi$ is transcendental and $h_k\to\infty$.
Thus every hypothesis of Theorem~\ref{thm:gap} holds with degree bound
$N=D$ and constants $C=A=2$. There are therefore constants $c,E_0>0$
such that
\begin{equation}\label{eq:source-measure}
  |\xi-\alpha|\geq c\exp\bigl(-E_0\mathsf h(\alpha)\bigr)
\end{equation}
for every real algebraic number $\alpha$ of degree at most $D$ which is
distinct from every $\gamma_k$.

All terms $\gamma_k$ have degree at least $m$: the odd terms have degree
$m$, and the even terms have degree $D\geq m$. Hence every real algebraic
number of degree less than $m$ is distinct from all the $\gamma_k$.
Applying~\eqref{eq:source-measure} and then~\eqref{eq:weil-heights}, we
obtain
\[
  w_j^*(\xi)<\infty
  \qquad(1\leq j<m).
\]
Together with~\eqref{eq:source-infinite}, this proves
\begin{equation}\label{eq:source-type}
  \xi\in\U_m.
\end{equation}
The argument includes $m=1$, when there are no smaller degrees to exclude.

\subsection{The type of the image}

Put $\eta=R(\xi)$ and $\delta_k=R(\gamma_k)$. The number $\eta$ is
transcendental: otherwise the nonzero polynomial $P(X)-\eta Q(X)$,
with algebraic coefficients, would vanish at $\xi$. By construction,
\begin{equation}\label{eq:image-degrees}
  \deg\delta_{2j-1}=em,
  \qquad \deg\delta_{2j}=r.
\end{equation}
Also,~\eqref{eq:weil-heights} gives
\[
  \mathsf h(\delta_k)=d h_k+O_R(1).
\]
In particular, these heights tend to infinity. For each fixed $A>0$,
\eqref{eq:lipschitz}, \eqref{eq:limit-tail}, and~\eqref{eq:weil-heights}
give a constant $C>0$, depending only on $A,R,I$, such that
\[
\begin{aligned}
  0<|\eta-\delta_{2j}|\exp\bigl(A\mathsf h(\delta_{2j})\bigr)
  &\leq M\exp\bigl(-2j h_{2j}+A\mathsf h(\delta_{2j})\bigr)\\
  &\leq C\exp\bigl(-(2j-Ad)h_{2j}\bigr)<1
\end{aligned}
\]
for all sufficiently large $j$. The even terms have degree $r$, so
\begin{equation}\label{eq:image-infinite}
  w_r^*(\eta)=\infty.
\end{equation}
If $r=1$, this already gives $\eta\in\U_1$. Suppose henceforth that
$r\geq 2$.

It remains to exclude approximation of $\eta$ by algebraic numbers of
degree below $r$. We transfer the lower bound~\eqref{eq:source-measure}
through the local inverse $S$. Choose $\varepsilon>0$ such that
$(\eta-\varepsilon,\eta+\varepsilon)\subset R(I)$, and let $\beta$ be a
real algebraic number with $\deg\beta\leq r-1$.

Suppose first that $|\eta-\beta|<\varepsilon$, and set
$\alpha=S(\beta)\in I$. The relation
\[
  P(\alpha)-\beta Q(\alpha)=0
\]
shows that $\alpha$ is algebraic. Since $R(\alpha)=\beta$, we have
$K(\beta)\subseteq K(\alpha)$ and
$[K(\alpha):K(\beta)]\leq d$. Consequently,
\begin{equation}\label{eq:pullback-degree}
\begin{aligned}
  \deg\alpha
  &\leq[K(\alpha):\Q]
  \leq d[K(\beta):\Q]\\
  &\leq ed\deg\beta
  \leq ed(r-1)<D.
\end{aligned}
\end{equation}
We also have $\alpha\ne\gamma_k$ for every $k$. Indeed, equality would
give $\beta=R(\gamma_k)=\delta_k$, whereas
$\deg\delta_k\in\{em,r\}$ and both degrees are at least $r>\deg\beta$.
Thus~\eqref{eq:source-measure} applies to $\alpha$, and
\eqref{eq:lipschitz} yields
\[
\begin{aligned}
  |\eta-\beta|
  &=|R(\xi)-R(\alpha)|\\
  &\geq c_0|\xi-\alpha|
  \geq c_0c\exp\bigl(-E_0\mathsf h(\alpha)\bigr).
\end{aligned}
\]
By~\eqref{eq:weil-heights},
$\mathsf h(\alpha)=d^{-1}\mathsf h(\beta)+O_R(1)$. Thus the preceding
lower bound is at least a fixed positive constant times
$\exp(-(E_0/d)\mathsf h(\beta))$.

If $|\eta-\beta|\geq\varepsilon$, the same estimate holds after decreasing
the constant, since $\mathsf h(\beta)\geq 0$. Consequently, there is
$c'>0$, independent of $\beta$, such that
\begin{equation}\label{eq:image-measure}
  |\eta-\beta|\geq c'\exp\left(-\frac{E_0}{d}\mathsf h(\beta)\right)
  \qquad(\deg\beta\leq r-1).
\end{equation}
By~\eqref{eq:weil-heights}, this implies
\[
  w_j^*(\eta)<\infty
  \qquad(1\leq j<r).
\]
Together with~\eqref{eq:image-infinite}, we conclude that
$R(\xi)=\eta\in\U_r$. This completes the proof.\qed

\begin{proof}[Proof of Corollary~\ref{cor:classification}]
If $e>1$, Theorem~\ref{thm:local-spectrum} realizes the image type
$em>m$, so preservation requires $e=1$, or equivalently $R\in\Q(x)$.
If also $m,d\geq 2$, the same theorem realizes the type $m-1$, and
preservation again fails. Conversely, Proposition~\ref{prop:type-bounds}
gives preservation when $e=1$ and either $m=1$ or $d=1$. In the
M\"obius case, applying it also to $R^{-1}$ gives equality.
\end{proof}

\section*{Acknowledgments}

The author was supported by the Brazilian National Council for Scientific
and Technological Development (CNPq), Grant No.~304467/2023-5.

\end{document}